\newtheorem{theorem}{Theorem}
\newtheorem{lemma}{Lemma}
\newtheorem{corollary}{Corollary}
\begin{document}
\author{L-E. Persson, G. Tephnadze and G. Tutberidze }
\title[Strong convergence of Fejér means]{Some new result on strong convergence  of Fejér means with respect to Vilenkin systems}

\address {L-E. Persson, Department of Computer Science and Computational Engineering, UiT -The Arctic University of Norway, P.O. Box 385, N-8505, Narvik, Norway and Department of Mathematics and Computer Science, Karlstad University, Sweden.}
\email{lars.e.persson@uit.no \ \ \ larserik.persson@kau.se}
\address {G. Tephnadze, The University of Georgia, School of Science and Technology, 77a Merab Kostava St, Tbilisi, 0128, Georgia.}
\email{g.tephnadze@ug.edu.ge}
\address{G.Tutberidze, The University of Georgia, School of science and technology, 77a Merab Kostava St, Tbilisi 0128, Georgia and Department of Computer Science and Computational Engineering, UiT - The Arctic University of Norway, P.O. Box 385, N-8505, Narvik, Norway.}
\email{giorgi.tutberidze1991@gmail.com}
\thanks{The research of third author is supported by Shota Rustaveli National Science Foundation grant no. PHDF-18-476.}

\date{}
\maketitle

\begin{abstract}
In this paper we discuss and prove some new strong convergence theorems for partial sums and Fejér means with respect to the Vilenkin system.
\end{abstract}

\date{}

\textbf{2010 Mathematics Subject Classification.} 42C10.

\noindent \textbf{Key words and phrases:} Vilenkin system, Fejér means, martingale Hardy space, strong convergence.

\section{Introduction}

Concerning definitions and notations used in this introductions we refer to Sections 2. 

It is well-known (for details see e.g. \cite{AVD, gol, sws}) that Vilenkin system forms not basis in the space $L_{1}\left(G_m\right).$ Moreover, there is a function in the martingale
Hardy space $H_{1}\left( G_{m}\right) ,$ such that the partial sums of $f$
are not bounded in $L_{1}\left( G_{m}\right) $-norm. However (for details see e.g. \cite{tep12.tep13}), for all $p>0$ and $f\in H_{p}$, there exists an absolute constant $c_{p} $ such that 
\begin{equation}  \label{1ccs}
\left\Vert S_{M_k}f\right\Vert _{p}\leq c_{p}\left\Vert f\right\Vert
_{H_{p}}.
\end{equation}%

In Gát \cite{gat1} (see also Simon \cite{Si3}) the following strong convergence result was obtained for
all  $f\in H_{1}\left( G_{m}\right) :$%
\begin{equation*}
\underset{n\rightarrow \infty }{\lim }\frac{1}{\log n}\overset{n}{\underset{%
		k=1}{\sum }}\frac{\left\Vert S_{k}f-f\right\Vert _{1}}{k}=0,
\end{equation*}%
It follow that 
\begin{equation*}
\frac{1}{\log n}\overset{n}{\underset{k=1}{\sum }}\frac{\left\Vert S_{k}f\right\Vert _{1}}{k} \leq \left\Vert f\right\Vert _{H_1}, \  \ n=2,3,... 
\end{equation*}%
In  \cite{tut1} was proved that for any $f\in H_{1}$ there exists an absolute constant $c,$ such that
\begin{equation*}
\sup_{n\in \mathbb{N}}\frac{1}{n\log n}\overset{n}{\underset{k=1}{\sum }}\left\Vert S_{k}f\right\Vert _{1}\leq \left\Vert f\right\Vert_{H_1}, \  \ n=2,3,... 
\end{equation*}
Moreover for every nondecreasing
function $\varphi :\mathbb{N}_{+}\rightarrow \lbrack 1,$ $\infty )$ satisfying the condition
\begin{equation*}
\overline{\lim_{n\rightarrow \infty }}\frac{\log n}{\varphi _{n}}=+\infty .
\end{equation*}
there exists a function $f\in H_{1},$ such that
\begin{equation*}
\sup_{n\in \mathbb{N}}\frac{1}{n\varphi_n}\overset{n}{\underset{k=1}{\sum }}%
\left\Vert S_{k}f\right\Vert _{1}=\infty .
\end{equation*}

For the Vilenkin system Simon \cite{si1}
proved that there is an absolute constant $c_{p},$ depending only on $p,$
such that
\begin{equation*}
\overset{\infty }{\underset{k=1}{\sum }}\frac{\left\Vert S_{k}f\right\Vert
	_{p}^{p}}{k^{2-p}}\leq c_{p}\left\Vert f\right\Vert _{H_{p}}^{p},
\end{equation*}%
for all $f\in H_{p}\left( G_{m}\right) $, where $0<p<1.$ In \cite%
{tep4} was proved that for any nondecreasing function $\Phi :\mathbb{N}%
\rightarrow \lbrack 1,$ $\infty )$, satisfying the condition $\underset{%
	n\rightarrow \infty }{\lim }\Phi \left( n\right) =+\infty ,$ there exists a
martingale $f\in H_{p}\left( G_{m}\right) ,$ such that

\begin{equation}
\text{ }\underset{k=1}{\overset{\infty }{\sum }}\frac{\left\Vert
	S_{k}f\right\Vert _{L_{p,\infty }}^{p}\Phi \left( k\right) }{k^{2-p}}=\infty
,\text{ for }0<p<1.  \label{2c}
\end{equation}

Strong convergence theorems of two-dimensional partial sums was investigate
by Weisz \cite{We}, Goginava \cite{gg}, Gogoladze \cite{Go}, Tephnadze \cite
{tep3,tep6}, (see also \cite{smt}).

Weisz \cite{We2} considered the norm convergence of Fejér means of
Walsh-Fourier series and proved the following:

\textbf{Theorem W1\ (Weisz).} Let $p>1/2$ and $f\in H_{p}.$ Then 
\begin{equation*}
\left\Vert \sigma _{k}f\right\Vert _{p}\leq c_{p}\left\Vert f\right\Vert
_{H_{p}}.
\end{equation*}

Moreover, Weisz \cite{We2} (see also \cite{PT}) also proved that for all $p>0$ and $f\in H_{p}$,
there exists an absolute constant $c_{p} $ such that 
\begin{equation}  \label{1ccsimga}
\left\Vert \sigma _{M_k}f\right\Vert _{p}\leq c_{p}\left\Vert f\right\Vert
_{H_{p}}.
\end{equation}%
Theorem W1 implies that%
\begin{equation*}
\frac{1}{n^{2p-1}}\overset{n}{\underset{k=1}{\sum }}\frac{\left\Vert \sigma
_{k}f\right\Vert _{p}^{p}}{k^{2-2p}}\leq c_{p}\left\Vert f\right\Vert
_{H_{p}}^{p},\text{ \ \ \ }\left( 1/2<p<\infty \right).
\end{equation*}

If Theorem W1 should hold for $0<p\leq 1/2,$ then we would have
\begin{equation}  \label{2cc}
\overset{\infty}{\underset{k=1}{\sum }}\frac{\left\Vert \sigma
_{k}f\right\Vert _{p}^{p}}{k^{2-2p}}\leq c_{p}\left\Vert f\right\Vert
_{H_{p}}^{p},\text{ \ \ \ }\left(0<p<1/2\right)
\end{equation}
\begin{equation}
\frac{1}{\log n}\overset{n}{\underset{k=1}{\sum }}\frac{\left\Vert \sigma
_{k}f\right\Vert _{1/2}^{1/2}}{k}\leq c\left\Vert f\right\Vert
_{H_{1/2}}^{1/2}\text{ \ \ \ }  \  \ n=2,3,...  \label{2cca}
\end{equation}
and 
\begin{equation}  \label{2ccaa}
\frac{1}{n}\overset{n}{\underset{k=1}{\sum }}{\left\Vert \sigma
_{k}f\right\Vert _{1/2}^{1/2}}\leq c\left\Vert f\right\Vert _{H_{1/2}}^{1/2}.
\end{equation}

However, in \cite{tep1} (see also \cite{BGG,BGG2} and \cite{tep7,tep8,tep9,tep10,tep100}) it was proved
that the assumption $p>1/2$ in Theorem W1 is essential. In particular, there
exists a martingale $f\in H_{1/2}$ such that 
\begin{equation*}
\sup_{n\in \mathbb{N}}\left\Vert \sigma _{n}f\right\Vert _{1/2}=+\infty .
\end{equation*}

In \cite{bt1} (see also \cite{tep5}) it was proved that (\ref{2cc}) and (\ref%
{2cca}) hold though Theorem W1 is not true for $0<p\leq 1/2.$

Moreover, in \cite{bt1} it was proved that if $0<p<1/2$ and $\Phi :\mathbb{N}%
_{+}\rightarrow\lbrack 1,\infty )$ be any nondecreasing function satisfying
condition 
\begin{equation*}
\overline{\underset{k\rightarrow \infty }{\lim }}\frac{k^{2-2p}}{\Phi_k }%
=\infty,
\end{equation*}
then there exists a martingale $f\in H_{p},$ such that 
\begin{equation*}
\underset{m=1}{\overset{\infty }{\sum }}\frac{\left\Vert \sigma
_{m}f\right\Vert _{weak-L_{p}}^{p}}{\Phi_m}=\infty.
\end{equation*}

On the other hand, for the Walsh system (\ref{2ccaa}) does not hold (see \cite{tep5}%
). In particular, it was proved that there exists a martingale $f\in
H_{1/2},$ such that
\begin{equation}  \label{2ccaacc}
\underset{n\in \mathbf{\mathbb{N}
}}{\sup }\frac{1}{n}\underset{m=1}{\overset{n}{\sum }}\left\Vert \sigma _{m}f\right\Vert
_{1/2}^{1/2}=\infty.
\end{equation}

In this paper we prove more general result for bounded Vilenkin system. In special case we also obtain  (\ref{2ccaacc}).

This paper is organized as follows: in order not to disturb our discussions
later on some definitions and notations are presented in Section 2. For the proofs of the main results we need some auxiliary Lemmas, some of them are new and of independent interest. These results are presented in Section 3. The main result with proof is given in Section 4. \qquad

\section{Definitions and Notations}

Let $\mathbb{N}_{+}$ denote the set of the positive integers, $\mathbb{N}:=%
\mathbb{N}_{+}\cup \{0\}.$

Let $m:=(m_{0,}m_{1},\dots)$ denote a sequence of the positive integers not
less than 2.

Denote by 
\begin{equation*}
Z_{m_{k}}:=\{0,1,\dots,m_{k}-1\}
\end{equation*}
the additive group of integers modulo $m_{k}.$

Define the group $G_{m}$ as the complete direct product of the group $%
Z_{m_{j}}$ with the product of the discrete topologies of $Z_{m_{j}}$ $^{,}$%
s.

The direct product $\mu $ of the measures 
\begin{equation*}
\mu _{k}\left( \{j\}\right):=1/m_{k}\text{ \qquad }(j\in Z_{m_{k}})
\end{equation*}
is the Haar measure on $G_{m_{\text{ }}}$with $\mu \left( G_{m}\right) =1.$

If $\sup_{n\in \mathbb{N}}m_{n}<\infty $, then we call $G_{m}$ a bounded
Vilenkin group. If the generating sequence $m$ is not bounded then $G_{m}$
is said to be an unbounded Vilenkin group. In this paper we discuss
bounded Vilenkin groups only.

The elements of $G_{m}$ are represented by sequences 
\begin{equation*}
x:=(x_{0},x_{1},\dots,x_{k},\dots)\qquad \left( \text{ }x_{k}\in
Z_{m_{k}}\right).
\end{equation*}

It is easy to give a base for the neighbourhood of $G_{m}$ namely
\begin{equation*}
I_{0}\left( x\right):=G_{m},
\end{equation*}%
and
\begin{equation*}
I_{n}(x):=\{y\in G_{m}\mid y_{0}=x_{0},\dots,y_{n-1}=x_{n-1}\}\text{ }(x\in
G_{m},\text{ }n\in \mathbb{N})
\end{equation*}%
Denote $I_{n}:=I_{n}\left( 0\right) $ for $n\in \mathbb{N}$ and $\overline{%
I_{n}}:=G_{m}$ $\backslash $ $I_{n}$ $.$

Let

\begin{equation*}
e_{n}:=\left( 0,\dots,0,x_{n}=1,0,\dots\right) \in G_{m}\qquad \left( n\in 
\mathbb{N}\right).
\end{equation*}

If we define the so-called generalized number system based on $m$ in the
following way: 
\begin{equation*}
M_{0}:=1,\text{ \qquad }M_{k+1}:=m_{k}M_{k\text{ }},\ \qquad (k\in \mathbb{N})
\end{equation*}%
then every $n\in \mathbb{N}$ can be uniquely expressed as $%
n=\sum_{k=0}^{\infty }n_{j}M_{j}$, where $n_{j}\in Z_{m_{j}}$ $~(j\in \mathbb{%
N})$ and only a finite number of $n_{j}`$s differ from zero. Let $\left\vert
n\right\vert :=\max $ $\{j\in \mathbb{N};$ $n_{j}\neq 0\}.$

For the natural number $n=\sum_{j=1}^{\infty }n_{j}M_{j},$ we define%
\begin{equation*}
\delta _{j}=signn_{j}=sign\left( \ominus n_{j}\right) ,\text{ \ \ \ \ }%
\delta _{j}^{\ast }=\left\vert \ominus n_{j}-1\right\vert \delta _{j},
\end{equation*}%
where $\ominus $ is the inverse operation for $a_{k}\oplus b_{k}=\left(
a_{k}+b_{k}\right) $mod$m_{k}.$

We define functions $v$ and $v^{\ast }$ by 
\begin{equation*}
v\left( n\right) =\sum_{j=0}^{\infty }\left\vert \delta _{j+1}-\delta
_{j}\right\vert +\delta _{0},\text{ \ }v^{\ast }\left( n\right)
=\sum_{j=0}^{\infty }\delta _{j}^{\ast },
\end{equation*}

The $n$-th Lebesgue constant is defined in the following way 
\begin{equation*}
L_{n}=\left\Vert D_{n}\right\Vert _{1}.
\end{equation*}

The norm (or quasi norm) of the space $L_{p}(G_{m})$ is defined by \qquad
\qquad \thinspace\ 
\begin{equation*}
\left\Vert f\right\Vert _{p}:=\left( \int_{G_{m}}\left\vert f(x)\right\vert
^{p}d\mu (x)\right) ^{1/p}\qquad \left( 0<p<\infty \right) .
\end{equation*}

The space $weak-L_{p}\left( G_{m}\right) $ consists of all measurable
functions $f$ for which

\begin{equation*}
\left\Vert f\right\Vert _{weak-L_{p}(G_{m})}:=\underset{\lambda >0}{\sup }%
\lambda ^{p}\mu \left\{ f>\lambda \right\} <+\infty .
\end{equation*}

Next, we introduce on $G_{m}$ an orthonormal system which is called the
Vilenkin system.

At first define the complex valued function $r_{k}\left( x\right)
:G_{m}\rightarrow \mathbb{C},$ the generalized Rademacher functions, as 
\begin{equation*}
r_{k}\left( x\right):=\exp \left( 2\pi\imath x_{k}/m_{k}\right) \text{
\qquad }\left( \imath^{2}=-1,\text{ }x\in G_{m},\text{ }k\in \mathbb{N}%
\right).
\end{equation*}

Now define the Vilenkin system $\psi:=(\psi _{n}:n\in \mathbb{N})$ on $G_{m} 
$ as: 
\begin{equation*}
\psi _{n}\left( x\right):=\prod_{k=0}^{\infty }r_{k}^{n_{k}}\left( x\right) 
\text{ \qquad }\left( n\in \mathbb{N}\right).
\end{equation*}

Specially, we call this system the Walsh-Paley one if $m\equiv 2.$

The Vilenkin system is orthonormal and complete in $L_{2}\left( G_{m}\right)
\,$ (for details see e.g. \cite{AVD, sws, Vi}).

If $\ f\in L_{1}\left( G_{m}\right) $ then we can define Fourier coefficients,
partial sums of the Fourier series, Fejér means, Dirichlet and Fejér kernels
with respect to the Vilenkin system in the usual manner: 
\begin{eqnarray*}
\widehat{f}(k) &:&=\int_{G_{m}}f\overline{\psi }_{k}d\mu \text{\thinspace
\qquad\ \ \ \ }\left( \text{ }k\in \mathbb{N}\text{ }\right); \\
S_{n}f &:&=\sum_{k=0}^{n-1}\widehat{f}\left( k\right) \psi _{k}\ \text{%
\qquad\ \ }\left( \text{ }n\in \mathbb{N}_{+},\text{ }S_{0}f:=0\right); \\
\sigma _{n}f &:&=\frac{1}{n}\sum_{k=0}^{n-1}S_{k}f\text{ \qquad\ \ \ \ \ }%
\left( \text{ }n\in \mathbb{N}_{+}\text{ }\right); \\
D_{n} &:&=\sum_{k=0}^{n-1}\psi _{k\text{ }}\text{ \qquad\ \ \qquad }\left( 
\text{ }n\in \mathbb{N}_{+}\text{ }\right); \\
K_{n} &:&=\frac{1}{n}\overset{n-1}{\underset{k=0}{\sum }}D_{k}\text{ \qquad\
\ \ \thinspace }\left( \text{ }n\in \mathbb{N}_{+}\text{ }\right).
\end{eqnarray*}

Recall that  (for details see e.g. \cite{AVD} and \cite{gol})
\begin{equation}  \label{3}
\quad \hspace*{0in}D_{M_{n}}\left( x\right) =\left\{ 
\begin{array}{l}
\text{ }M_{n}\text{ \ \ \ }x\in I_{n} \\ 
\text{ }0\text{ \qquad }x\notin I_{n}%
\end{array}
\right.
\end{equation}
and 
\begin{equation}  \label{9dn}
D_{s_{n}M_{n}}=D_{M_{n}}\sum_{k=0}^{s_{n}-1}\psi
_{kM_{n}}=D_{M_{n}}\sum_{k=0}^{s_{n}-1}r_{n}^{k} ,\text{ \qquad }  1\leq
s_{n}\leq m_{n}-1.
\end{equation}

The $\sigma$-algebra generated by the intervals $\left\{ I_{n}\left(
x\right):x\in G_{m}\right\} $ will be denoted by $\digamma _{n}$ $\left(
n\in \mathbb{N}\right).$ Denote by $f=\left( f_{n},\ \ n\in \mathbb{N}%
\right) $ a martingale with respect to $\digamma _{n}$ $\left( n\in \mathbb{N%
}\right)$ (for details see e.g. \cite{We1}). The maximal function of a
martingale $f$ is defined by \qquad 
\begin{equation*}
f^{*}=\sup_{n\in \mathbb{N}}\left| f_n\right|.
\end{equation*}

In the case $f\in L_{1}(G_{m}),$ the maximal functions are also be given by 
\begin{equation*}
f^{*}\left( x\right) =\sup_{n\in \mathbb{N}}\frac{1}{\left| I_{n}\left(
x\right) \right| }\left| \int_{I_{n}\left( x\right) }f\left( u\right) \mu
\left( u\right) \right|.
\end{equation*}

For $0<p<\infty $ the Hardy martingale spaces $H_{p}\left( G_{m}\right)$
consist of all martingales for which 
\begin{equation*}
\left\| f\right\| _{H_{p}}:=\left\| f^{*}\right\| _{p}<\infty.
\end{equation*}

If $f\in L_{1}(G_{m}),$ then it is easy to show that the sequence $\left(
S_{M_{n}}f :n\in \mathbb{N}\right) $ is a martingale. If $f=( f_n,\ \ n\in \mathbb{N}) $ is martingale, then the Vilenkin-Fourier coefficients must be
defined in a slightly different manner: $\qquad \qquad $ 
\begin{equation*}
\widehat{f}\left( i\right) :=\lim_{k\rightarrow \infty
}\int_{G_{m}}f_k \left( x\right) \overline{\psi }_{i}\left(
x\right) d\mu \left( x\right) .
\end{equation*}

The Vilenkin-Fourier coefficients of $f\in L_{1}\left( G_{m}\right) $ are
the same as those of the martingale $\left( S_{M_{n}}f:n\in \mathbb{N}%
\right) $ obtained from $f$.

A bounded measurable function $a$ is p-atom, if there exist an interval $I$,
such that%
\begin{equation*}
\int_{I}ad\mu =0,\text{ \ \ }\left\Vert a\right\Vert _{\infty }\leq \mu
\left( I\right) ^{-1/p},\text{ \ \ supp}\left( a\right) \subset I.\qquad
\end{equation*}
\qquad
\section{Auxiliary Lemmas}

\begin{lemma}
\cite{We1,We3} \label{lemma2.1} A martingale $f=\left( f_{n}, \ \ n\in 
\mathbb{N}\right) $ is in $H_{p}\left( 0<p\leq 1\right) $ if and only if
there exist a sequence $\left( a_{k},k\in \mathbb{N}\right) $ of p-atoms and
a sequence $\left( \mu _{k},k\in \mathbb{N}\right) $ of real numbers such
that, for every $n\in \mathbb{N},$%
\begin{equation}
\qquad \sum_{k=0}^{\infty }\mu _{k}S_{M_{n}}a_{k}=f_{n},\text{ \ \ a.e.,}
\label{condmart}
\end{equation}%
where 
\begin{equation*}
\qquad \sum_{k=0}^{\infty }\left\vert \mu _{k}\right\vert ^{p}<\infty .
\end{equation*}%
Moreover, 
\begin{equation*}
\left\Vert f\right\Vert _{H_{p}}\backsim \inf \left( \sum_{k=0}^{\infty
}\left\vert \mu _{k}\right\vert ^{p}\right) ^{1/p}
\end{equation*}
where the infimum is taken over all decomposition of $f$ of the form (\ref%
{condmart}).
\end{lemma}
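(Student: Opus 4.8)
The plan is to establish the stated equivalence by proving the two implications separately, the ``if'' direction being elementary and the ``only if'' direction requiring a stopping-time (Calder\'on--Zygmund type) decomposition of the martingale.

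First I would treat sufficiency. Assume $f_{n}=\sum_{k=0}^{\infty}\mu_{k}S_{M_{n}}a_{k}$ a.e.\ for every $n$, with $\sum_{k}|\mu_{k}|^{p}<\infty$. The key observation is that $S_{M_{n}}$ is the conditional expectation operator with respect to $\digamma_{n}$. Hence, for a single $p$-atom $a$ with supporting interval $I$ of measure $M_{N}^{-1}$, one has $S_{M_{n}}a=0$ for $n\le N$ (since $I$ lies inside a single atom of $\digamma_{n}$ and $\int_{I}a\,d\mu=0$), while for $n>N$ the function $S_{M_{n}}a$ is supported in $I$ and $|S_{M_{n}}a|\le\|a\|_{\infty}\le\mu(I)^{-1/p}$. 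Therefore the maximal function $a^{\ast}:=\sup_{n}|S_{M_{n}}a|$ satisfies $\|a^{\ast}\|_{p}^{p}\le\mu(I)^{-1}\mu(I)=1$. Since $|f_{n}|\le\sum_{k}|\mu_{k}|\,a_{k}^{\ast}$ uniformly in $n$, we get $f^{\ast}\le\sum_{k}|\mu_{k}|\,a_{k}^{\ast}$, and, because $0<p\le1$, $(f^{\ast})^{p}\le\sum_{k}|\mu_{k}|^{p}(a_{k}^{\ast})^{p}$, so that $\|f\|_{H_{p}}^{p}\le\sum_{k}|\mu_{k}|^{p}$; taking the infimum over all admissible decompositions yields ``$\le$'' in the asserted equivalence.

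For necessity I would run the classical martingale stopping-time construction. Given $f\in H_{p}$, for each $j\in\mathbb{Z}$ put $\mathbb{F}_{j}:=\{f^{\ast}>2^{j}\}$; being open, $\mathbb{F}_{j}$ is a disjoint union $\bigcup_{i}I_{j,i}$ of maximal basic intervals $I_{j,i}$ (each some $I_{N}(y)$ whose predecessor $I_{N-1}(y)$ is not contained in $\mathbb{F}_{j}$). Using the stopping times $\nu_{j}(x):=\inf\{n:|f_{n}(x)|>2^{j}\}$ and the stopped martingales $f^{\nu_{j}}$, one obtains a telescoping identity $f_{n}=\sum_{j\in\mathbb{Z}}g^{(j)}_{n}$ (up to an additive constant handled separately), where $g^{(j)}:=f^{\nu_{j+1}}-f^{\nu_{j}}$ is a martingale supported on $\mathbb{F}_{j}$ whose components vanish on each $I_{j,i}$ before the rank of that interval, so that $\int_{I_{j,i}}g^{(j)}\,d\mu=0$. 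One then sets $\mu_{j,i}:=c\,2^{j}\mu(I_{j,i})^{1/p}$ and $a_{j,i}:=\mu_{j,i}^{-1}g^{(j)}\mathbf{1}_{I_{j,i}}$, checks that each $a_{j,i}$ is a $p$-atom with support $I_{j,i}$, re-enumerates the countable family $\{a_{j,i}\}$ as a single sequence, and verifies $f_{n}=\sum\mu_{j,i}S_{M_{n}}a_{j,i}$ a.e. Finally, disjointness of the $I_{j,i}$ gives $\sum_{j,i}|\mu_{j,i}|^{p}=c^{p}\sum_{j}2^{jp}\mu(\mathbb{F}_{j})\le c_{p}\,\|f^{\ast}\|_{p}^{p}=c_{p}\,\|f\|_{H_{p}}^{p}$, the middle inequality being the standard comparison of $\sum_{j}2^{jp}\mu\{f^{\ast}>2^{j}\}$ with $\int(f^{\ast})^{p}\,d\mu$; together with the sufficiency part this yields $\|f\|_{H_{p}}\backsim\inf(\sum_{k}|\mu_{k}|^{p})^{1/p}$.

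The step I expect to be the main obstacle is showing that the blocks $g^{(j)}\mathbf{1}_{I_{j,i}}$ are, after normalization, genuine $L_{\infty}$-atoms, i.e.\ that $\|g^{(j)}\|_{\infty}\le c\,2^{j}$: this bound controls the jump of the martingale at the stopping time $\nu_{j}$ in terms of the level $2^{j}$, and it is precisely the place where the hypothesis that $G_{m}$ is a \emph{bounded} Vilenkin group is used (equivalently, the regularity of the filtration, $\sup_{n}m_{n}<\infty$). A constant $c>1$ appearing here is harmless, since replacing $a_{j,i}$ by $a_{j,i}/c$ and $\mu_{j,i}$ by $c\,\mu_{j,i}$ changes $\sum|\mu_{j,i}|^{p}$ only by the factor $c^{p}$, which is absorbed by the equivalence ``$\backsim$''. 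The remaining verifications --- the almost-everywhere convergence of the reconstructing series and the vanishing integral $\int_{I_{j,i}}a_{j,i}\,d\mu=0$, which relies on the maximality of $I_{j,i}$ --- are routine.
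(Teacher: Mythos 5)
First, note that the paper does not prove this lemma at all: it is quoted from Weisz \cite{We1,We3}, so the comparison here is with the standard proof in those references. Your sufficiency half is correct and is exactly the standard argument: $S_{M_n}$ is conditional expectation with respect to $\digamma_n$, so each atom satisfies $\sup_n|S_{M_n}a|\le \mu(I)^{-1/p}\mathbf{1}_{I}$ and vanishes for small $n$, and the $p$-subadditivity $(\sum x_k)^p\le\sum x_k^p$ gives $\|f\|_{H_p}^{p}\le\sum_k|\mu_k|^p$.

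The necessity half, however, has a genuine gap, and it sits precisely at the step you flagged. With your stopping time $\nu_j:=\inf\{n:|f_n|>2^j\}$, the block $g^{(j)}=f^{\nu_{j+1}}-f^{\nu_j}$ contains the overshoot of the martingale at the stopping time, and no bound $\|g^{(j)}\|_\infty\le c\,2^j$ holds -- not even on the dyadic group. Concretely, take $m\equiv 2$, $f_0=0$, $f_1=\tfrac32 r_0$, $f_n=\tfrac32 r_0+M r_0r_1$ for $n\ge 2$ with $M$ huge; then $\nu_0\equiv 1$, $\nu_1\equiv 2$, and $g^{(0)}_n=Mr_0r_1$ for $n\ge 2$, so $\|g^{(0)}\|_\infty=M$ while $2^{j}=1$. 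Boundedness of the Vilenkin group (regularity of the filtration) does not repair this by any pointwise argument: regularity says $g_n\le R\,g_{n-1}$ only for \emph{nonnegative} martingales, and no inequality of the type $|f_{\nu_j}|\le R\,2^j$ or $|f_n|\le R|f_{n-1}|$ is true (the same example refutes it). A related symptom is that with your $\nu_j$ the telescoping residual $\lim_{j\to-\infty}f_{n\wedge\nu_j}$ is $f_{n\wedge n_0}$, $n_0(x)=\inf\{n:f_n(x)\neq 0\}$, which need not vanish or be constant. The proof in Weisz's books avoids the overshoot by stopping \emph{predictably}: one first proves the atomic decomposition for the space $P_p$ of martingales admitting a nondecreasing predictable majorant $(\lambda_n)$ with $|f_n|\le\lambda_{n-1}$, taking $\nu_k:=\inf\{n:\lambda_n>2^k\}$, so that $|f_{n\wedge\nu_k}|\le\lambda_{(n\wedge\nu_k)-1}\le 2^k$ holds by construction and the blocks really are bounded by $c\,2^k$; then one invokes the equivalence $H_p^*\sim P_p$ for regular filtrations, which is where $\sup_n m_n<\infty$ enters and which is itself a nontrivial theorem proved via good-$\lambda$/distribution-function inequalities, not a routine jump estimate. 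So your outline would need either this two-step route or a genuinely different device (e.g.\ stopping through a predictable control built from the level sets of $f^*$); as written, the key $L_\infty$ estimate is false and the decomposition does not produce atoms.
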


By using atomic decomposition of $f\in H_p $ martingales, we can derive a
counterexample, which play a central role to prove sharpness of our main
results and it will be used several times in the paper:

\begin{lemma}
\label{lemma3} \cite{ptw} Let $n\in \mathbb{N}$ and $1\leq s_{n}\leq m_{n}-1$%
. Then\bigskip 
\begin{equation*}
s_{n}M_{n}K_{s_{n}M_{n}}=\sum_{l=0}^{s_{n}-1}\left(
\sum_{t=0}^{l-1}r_{n}^{t}\right) M_{n}D_{M_{n}}+\left(
\sum_{l=0}^{s_{n}-1}r_{n}^{l}\right) M_{n}K_{M_{n}}
\end{equation*}%
and%
\begin{equation*}
\left\vert s_{n}M_{n}K_{s_{n}M_{n}}\left( x\right) \right\vert \geq \frac{%
M_{n}^{2}}{2\pi },\text{ \ \ \ for \ \ \ \ }x\in I_{n+1}\left(
e_{n-1}+e_{n}\right) .
\end{equation*}

Moreover, if $x\in I_{t}/I_{t+1},$ \ $x-x_{t}e_{t}\notin I_{n}$ and $n>t,$
then%
\begin{equation}  \label{100kn}
K_{s_{n}M_{n}}(x)=0.
\end{equation}
\end{lemma}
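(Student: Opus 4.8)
\textbf{Proof proposal for Lemma \ref{lemma3}.}

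The plan is to start from the known decomposition of the Dirichlet kernels along the dyadic-type blocks $kM_n$. Recall from \eqref{9dn} that $D_{kM_n} = D_{M_n}\sum_{t=0}^{k-1} r_n^t$ for $1\le k\le m_n-1$, and of course $D_0 = 0$. I would write out $s_n M_n K_{s_n M_n} = \sum_{j=0}^{s_n M_n - 1} D_j$ and split the sum into the $s_n$ consecutive blocks $[lM_n,(l+1)M_n)$ for $l=0,\dots,s_n-1$. Inside the $l$-th block one writes $j = lM_n + i$ with $0\le i\le M_n-1$ and uses the standard identity $D_{lM_n+i} = D_{lM_n} + \psi_{lM_n} D_i = D_{lM_n} + r_n^l D_i$ (valid because the characters $\psi_{lM_n}$ are constant, namely $r_n^l$, on the relevant cosets and $i<M_n$). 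Summing over $i$ in a block gives $M_n D_{lM_n} + r_n^l \sum_{i=0}^{M_n-1} D_i = \left(\sum_{t=0}^{l-1} r_n^t\right) M_n D_{M_n} + r_n^l\, M_n K_{M_n}$, where I used \eqref{9dn} again for $D_{lM_n}$ and the definition of $K_{M_n}$. Summing this over $l=0,\dots,s_n-1$ yields exactly the claimed first formula.

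For the lower bound, I would evaluate the identity at a point $x\in I_{n+1}(e_{n-1}+e_n)$. By \eqref{3}, $D_{M_n}(x) = M_n$ there since $x\in I_n$ (as $x$ agrees with $e_{n-1}+e_n$ in the first $n+1$ coordinates, and the $n$-th coordinate is $1\ne 0$... here one must be a little careful: $e_{n-1}+e_n$ has a nonzero coordinate only in positions $n-1$ and $n$, so $x_j=0$ for $j<n-1$, hence $x\in I_{n-1}\subset I_n$ is false — rather $x\notin I_n$; I will instead need the behavior of $K_{M_n}$ at such points). So the real work is to compute $K_{M_n}(x)$ and $\sum_{l=0}^{s_n-1}\left(\sum_{t=0}^{l-1}r_n^t\right)$ at this specific $x$. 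I would use the known closed form for $M_n K_{M_n}$ on cosets $I_t\setminus I_{t+1}$ (this is the classical Vilenkin Fejér-kernel estimate), evaluate at $x_{n-1}=1$, $x_n=1$, and combine with the geometric sums $\sum r_n^t$ in $r_n(x) = \exp(2\pi\imath x_n/m_n) = \exp(2\pi\imath/m_n)$; a direct estimate of the resulting trigonometric expression gives the constant $M_n^2/(2\pi)$.

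For the final assertion \eqref{100kn}, suppose $x\in I_t\setminus I_{t+1}$ with $x - x_t e_t\notin I_n$ and $n>t$. Then both kernel pieces appearing in the first formula vanish: $D_{M_n}(x)=0$ by \eqref{3} since $x\notin I_n$ (as $x\notin I_{t+1}$ and... again one checks $x\notin I_n$ follows from $x-x_te_t\notin I_n$ together with $n>t$, because removing the $t$-th coordinate cannot put $x$ into $I_n$ when it was already outside a larger-index condition); and $K_{M_n}(x)=0$ by the same classical closed form for the Fejér kernel, which vanishes precisely on such cosets when $n>t$. Since every term of $s_nM_nK_{s_nM_n}$ is a scalar multiple of $M_nD_{M_n}$ or $M_nK_{M_n}$, the whole expression vanishes, giving $K_{s_nM_n}(x)=0$.

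The main obstacle I anticipate is bookkeeping the coset conditions correctly — precisely which of $I_n$, $I_{n+1}$, $I_t\setminus I_{t+1}$ a given point lies in after the shift $x\mapsto x-x_te_t$ — and invoking the exact sharp form of the Vilenkin Fejér kernel $M_nK_{M_n}$ on $I_t\setminus I_{t+1}$; once that closed form is in hand, the lower bound $M_n^2/(2\pi)$ and the vanishing statement are short trigonometric computations. I would cite \cite{ptw} (or the standard references \cite{AVD,gol}) for the kernel formula rather than re-deriving it.
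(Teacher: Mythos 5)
The paper itself gives no proof of Lemma \ref{lemma3} (it is quoted from \cite{ptw}), and your proposal reconstructs essentially the standard argument used there: the block decomposition of $\sum_{j=0}^{s_nM_n-1}D_j$ via $D_{lM_n+i}=D_{lM_n}+r_n^{l}D_i$ together with \eqref{9dn} gives the identity, and the two pointwise claims follow by evaluating it with the classical closed form of $K_{M_n}$ on the cosets $I_t\setminus I_{t+1}$; on $I_{n+1}(e_{n-1}+e_n)$ one indeed has $D_{M_n}=0$, $\left\vert \sum_{l=0}^{s_n-1}r_n^{l}\right\vert =\sin (\pi s_n/m_n)/\sin (\pi /m_n)\geq 1$ and $M_n\left\vert K_{M_n}\right\vert =M_nM_{n-1}/\bigl(2\sin (\pi /m_{n-1})\bigr)\geq M_n^{2}/(2\pi )$, which is exactly the stated constant. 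Your mid-proof correction that such $x$ lies outside $I_n$ (so the $D_{M_n}$ term drops out rather than contributing $M_n$) is the right resolution, and \eqref{100kn} follows as you say because both $D_{M_n}$ and $K_{M_n}$ vanish at the points in question; so the proposal is correct, modulo citing the standard kernel formula you invoke.
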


\begin{lemma}
\label{lemma4} \cite{bt1}Let $n=\sum_{i=1}^{r}s_{n_{i}}M_{n_{i}}$, where $%
n_{n_1}>n_{n_2}>\dots >n_{n_r}\geq 0$ and $1\leq s_{n_i}<m_{n_{i}}$ \ for
all $1\leq i\leq r$ as well as $n^{(k)}=n-\sum_{i=1}^{k}s_{n_i}M_{n_{i}}$,
where $0<k\leq r$. Then 
\begin{eqnarray*}
nK_{n}&=&\sum_{k=1}^{r}\left( \prod_{j=1}^{k-1}r_{n_{j}}^{s_{n_j}}\right)
s_{n_k}M_{n_{k}}K_{s_{n_k}M_{n_{k}}} \\
&+&\sum_{k=1}^{r-1}\left( \prod_{j=1}^{k-1}r_{n_{j}}^{s_{n_j}}\right)
n^{(k)}D_{s_{n_k}M_{n_{k}}}.
\end{eqnarray*}
\end{lemma}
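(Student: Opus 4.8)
The statement to prove is Lemma~\ref{lemma4}, which gives an explicit decomposition of $nK_n$ in terms of the building blocks $s_{n_k}M_{n_k}K_{s_{n_k}M_{n_k}}$ and the Dirichlet kernels $D_{s_{n_k}M_{n_k}}$. The natural strategy is induction on $r$, the number of nonzero ``blocks'' in the expansion $n=\sum_{i=1}^r s_{n_i}M_{n_i}$, using the telescoping structure already visible in Lemma~\ref{lemma3} (which is precisely the $r=1$ case, after rewriting $\sum_{t=0}^{l-1} r_n^t M_n D_{M_n} = M_n D_{M_n}\sum_{t=0}^{l-1} r_n^t$ and recognizing the two types of terms).

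The plan is as follows. First I would record the elementary identity relating $K_n$ to a ``top block'' plus a remainder: if $N=s_{n_1}M_{n_1}$ is the leading term of $n$ and $n^{(1)}=n-N$, then by splitting the sum $\sum_{k=0}^{n-1}D_k$ defining $nK_n$ at the index $N$ one gets
\begin{equation*}
nK_n = N K_N + \sum_{k=N}^{n-1} D_k = N K_N + \sum_{j=0}^{n^{(1)}-1} D_{N+j}.
\end{equation*}
The second step is to simplify $D_{N+j}$ for $0\le j<n^{(1)}<M_{n_1}$. Since $j<M_{n_1}\le M_{n_1+1}$ and $N=s_{n_1}M_{n_1}$ with $1\le s_{n_1}<m_{n_1}$, the standard Vilenkin splitting of Dirichlet kernels gives $D_{N+j}=D_N + \psi_N D_j = D_{s_{n_1}M_{n_1}} + r_{n_1}^{s_{n_1}} D_j$ (using $\psi_{s_{n_1}M_{n_1}} = r_{n_1}^{s_{n_1}}$ on the relevant range, together with $(\ref{9dn})$). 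Summing over $j$ from $0$ to $n^{(1)}-1$ yields
\begin{equation*}
nK_n = N K_N + n^{(1)} D_{s_{n_1}M_{n_1}} + r_{n_1}^{s_{n_1}} \, n^{(1)} K_{n^{(1)}}.
\end{equation*}
This is the key recursion: it expresses $nK_n$ through the first block term $s_{n_1}M_{n_1}K_{s_{n_1}M_{n_1}}$, one Dirichlet correction $n^{(1)}D_{s_{n_1}M_{n_1}}$, and the ``lower-order'' quantity $n^{(1)}K_{n^{(1)}}$, where $n^{(1)}=\sum_{i=2}^r s_{n_i}M_{n_i}$ has exactly $r-1$ blocks.

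Finally I would apply the induction hypothesis to $n^{(1)}K_{n^{(1)}}$, multiply the resulting expression by $r_{n_1}^{s_{n_1}}$, and observe that this multiplication shifts every partial product $\prod_{j=2}^{k-1} r_{n_j}^{s_{n_j}}$ into $\prod_{j=1}^{k-1} r_{n_j}^{s_{n_j}}$, so the two sums assemble exactly into the claimed form (the stray $k=1$ term $s_{n_1}M_{n_1}K_{s_{n_1}M_{n_1}}$ carries the empty product $\prod_{j=1}^{0}=1$, and the Dirichlet term $n^{(1)}D_{s_{n_1}M_{n_1}}$ is the $k=1$ summand of the second sum). The main obstacle — and the only genuinely non-routine point — is getting the Dirichlet kernel identity $D_{N+j}=D_{s_{n_1}M_{n_1}}+r_{n_1}^{s_{n_1}}D_j$ correct, including the precise range of validity: one must check that $j<n^{(1)}<M_{n_1}$ guarantees no ``carry'' past the $n_1$-th digit, so that the character $\psi_N$ factors as the pure power $r_{n_1}^{s_{n_1}}$ on $D_j$'s support behaviour; this is where $(\ref{3})$ and $(\ref{9dn})$ do the work, and the bounded-Vilenkin hypothesis is implicitly used only to keep all the $s_{n_i}$ in range. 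Everything else is bookkeeping with finite sums and telescoping products.
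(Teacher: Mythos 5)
Your proof is correct. Note that the paper itself gives no proof of Lemma~\ref{lemma4} --- it is quoted from \cite{bt1} --- so there is no in-paper argument to compare against; your induction on the number of blocks, driven by the recursion $nK_n = s_{n_1}M_{n_1}K_{s_{n_1}M_{n_1}} + n^{(1)}D_{s_{n_1}M_{n_1}} + r_{n_1}^{s_{n_1}}\, n^{(1)}K_{n^{(1)}}$, is exactly the standard route to this identity. The key step checks out: for $0\le j\le M_{n_1}$ the digit expansion of $s_{n_1}M_{n_1}+j$ is that of $j$ with the extra digit $s_{n_1}$ in position $n_1$, so $\psi_{s_{n_1}M_{n_1}+j}=r_{n_1}^{s_{n_1}}\psi_j$ and hence $D_{s_{n_1}M_{n_1}+j}=D_{s_{n_1}M_{n_1}}+r_{n_1}^{s_{n_1}}D_j$; and $n^{(1)}\le\sum_{k=0}^{n_2}(m_k-1)M_k=M_{n_2+1}-1<M_{n_1}$ keeps every index in range, after which the base case $r=1$ is trivial, $(n^{(1)})^{(k-1)}=n^{(k)}$, and multiplying the induction hypothesis by $r_{n_1}^{s_{n_1}}$ assembles the partial products as you say. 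Two small remarks: Lemma~\ref{lemma3} is not literally the $r=1$ case of Lemma~\ref{lemma4} (that case is trivial; Lemma~\ref{lemma3} decomposes $s_nM_nK_{s_nM_n}$ further into $D_{M_n}$ and $K_{M_n}$ pieces), and boundedness of the Vilenkin group plays no role here --- the constraint $1\le s_{n_i}<m_{n_i}$ is simply part of the digit expansion, not a consequence of $\sup_n m_n<\infty$.
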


\begin{lemma}
\label{lemma5} Let 
\begin{equation*}
n=\sum_{i=1}^{s}\sum_{k=l_{i}}^{m_{i}}n_{k}M_{k},
\end{equation*}
\ where 
\begin{equation*}
0\leq l_{1}\leq m_{1}\leq l_{2}-2<l_{2}\leq m_{2}\leq ...\leq
l_{s}-2<l_{s}\leq m_{s}.
\end{equation*}

Then 
\begin{equation*}
n\left\vert K_{n}\left( x\right) \right\vert \geq cM_{l_{i}}^{2},\text{ \ \
for \ \ }x\in I_{l_{i}+1}\left( e_{l_{i}-1}+e_{l_{i}}\right) ,
\end{equation*}%
where $\lambda =\sup_{n\in \mathbb{N}}m_{n}$ and $c$ is an absolute constant.
\end{lemma}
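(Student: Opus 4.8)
I would prove this by expanding $nK_{n}$ with the help of Lemma~\ref{lemma4}, applied to the \emph{ordinary} Vilenkin digits of $n$. Fix $i\in\{1,\dots,s\}$ (so that $n_{l_{i}}\geq1$) and set $E:=I_{l_{i}+1}(e_{l_{i}-1}+e_{l_{i}})$; for $x\in E$ one has $x_{0}=\dots=x_{l_{i}-2}=0$, $x_{l_{i}-1}=x_{l_{i}}=1$, so that $x\in I_{l_{i}-1}\setminus I_{l_{i}}$. Let $p_{1}>\dots>p_{r}$ be the positions with $n_{p_{k}}\neq0$, put $s_{k}:=n_{p_{k}}$ (so $1\leq s_{k}\leq m_{p_{k}}-1$ and $n=\sum_{k=1}^{r}s_{k}M_{p_{k}}$), and note that by the block hypothesis $l_{i}$ is one of these positions, say $p_{k_{0}}=l_{i}$: the $p_{k}$ with $k<k_{0}$ are exactly the nonzero positions of $n$ above $l_{i}$, while those with $k>k_{0}$ are the nonzero positions of the blocks $1,\dots,i-1$, all $\leq l_{i}-2$. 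Accordingly $a:=n^{(k_{0})}=\sum_{\nu<i}\sum_{j=l_{\nu}}^{m_{\nu}}n_{j}M_{j}$ satisfies $a<M_{m_{i-1}+1}\leq M_{l_{i}-1}\leq M_{l_{i}}/2$.

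The second step is to evaluate the two sums of Lemma~\ref{lemma4} on $E$. Fix $x\in E$. If $p_{k}>l_{i}$, then $x\notin I_{p_{k}}$ (because $x_{l_{i}-1}\neq0$ and $l_{i}-1<p_{k}$), so $D_{M_{p_{k}}}(x)=0$ by \eqref{3}, whence the corresponding Dirichlet term vanishes by \eqref{9dn}; and the Fej\'er term $s_{k}M_{p_{k}}K_{s_{k}M_{p_{k}}}(x)$ vanishes by \eqref{100kn} applied with $t=l_{i}-1$, since $x-x_{l_{i}-1}e_{l_{i}-1}\notin I_{p_{k}}$ (its coordinate in place $l_{i}$ equals $1$). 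The Dirichlet term of index $k_{0}$ equals $\big(\prod_{j<k_{0}}r_{p_{j}}^{s_{j}}\big)\,a\,D_{n_{l_{i}}M_{l_{i}}}$, which vanishes on $E$ as well since $x\notin I_{l_{i}}$ forces $D_{M_{l_{i}}}(x)=0$. The terms of index $k>k_{0}$ in both sums, after factoring out the unimodular product $R:=\prod_{j\leq k_{0}}r_{p_{j}}^{s_{j}}$, reassemble into precisely the right-hand side of Lemma~\ref{lemma4} written for $a$, so that their total equals $R(x)\,aK_{a}(x)$; and since $a<M_{l_{i}-1}$, one has $\psi_{l}(x)=1$ for every $l<a$ on $E$, hence $aK_{a}(x)=\sum_{l=0}^{a-1}D_{l}(x)=\sum_{l=0}^{a-1}l=a(a-1)/2$. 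The only surviving Fej\'er term is the one of index $k_{0}$, namely $\big(\prod_{j<k_{0}}r_{p_{j}}^{s_{j}}(x)\big)\,n_{l_{i}}M_{l_{i}}K_{n_{l_{i}}M_{l_{i}}}(x)$, whose modulus on $E$ is at least $M_{l_{i}}^{2}/(2\pi)$ by the first estimate of Lemma~\ref{lemma3}.

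Collecting the pieces, for every $x\in E$,
\begin{equation*}
n\left\vert K_{n}\left( x\right) \right\vert \geq \frac{M_{l_{i}}^{2}}{2\pi }-\frac{a(a-1)}{2}>\frac{M_{l_{i}}^{2}}{2\pi }-\frac{M_{l_{i}-1}^{2}}{2}\geq \left( \frac{1}{2\pi }-\frac{1}{8}\right) M_{l_{i}}^{2},
\end{equation*}
where the last inequality uses $M_{l_{i}-1}\leq M_{l_{i}}/2$; since $\tfrac{1}{2\pi}-\tfrac{1}{8}=\tfrac{4-\pi}{8\pi}>0$, this gives the assertion with the absolute constant $c=\tfrac{4-\pi}{8\pi}$ (in particular $\lambda$ does not actually enter the bound).

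The heart of the argument — and the step I expect to need the most care — is the observation that the tail of the Lemma~\ref{lemma4} expansion (all indices $k>k_{0}$) reassembles into $R\cdot aK_{a}$, which can then be evaluated \emph{exactly} on $E$ as $a(a-1)/2$. Estimating that tail term by term only yields something of size $O\big(\lambda^{2}\sum_{p<l_{i}}M_{p}^{2}\big)=O(\lambda^{2}M_{l_{i}-1}^{2})$, which for large $\lambda$ need not be dominated by the main term $M_{l_{i}}^{2}/(2\pi)$; so it is essential to use the precise cancellation encoded in Lemma~\ref{lemma4} rather than to bound crudely. Everything else — the vanishing of every term attached to a position above $l_{i}$, and the bookkeeping of which $p_{k}$ sits in which block — is a routine consequence of the shape of $E$ together with Lemmas~\ref{lemma3} and~\ref{lemma4}.
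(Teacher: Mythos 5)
Your proof is correct, and it follows the paper's skeleton only up to a point: like the paper, you expand $nK_{n}$ by Lemma~\ref{lemma4}, kill every term attached to a position above $l_{i}$ on $I_{l_{i}+1}(e_{l_{i}-1}+e_{l_{i}})$ using (\ref{3}), (\ref{9dn}) and (\ref{100kn}), and take the main term $\vert n_{l_i}M_{l_{i}}K_{n_{l_i}M_{l_{i}}}\vert \geq M_{l_{i}}^{2}/(2\pi)$ from Lemma~\ref{lemma3}. Where you genuinely diverge is the treatment of the low-order remainder. The paper estimates it term by term in absolute value (its $I_{2}$ and $I_{3}$), via $\vert mK_{m}\vert\leq m(m+1)/2$, $\vert D_{m}\vert\leq m$ and telescoping digit sums, and then has to juggle numerical constants under the extra assumption $l_{i}\geq 4$; as printed that bookkeeping is fragile (the bound obtained for $I_{2}+I_{3}$ is of order $M_{l_{i}-1}^{2}+\tfrac12 M_{l_{i}-1}M_{l_{i}-2}$, which is not visibly below $M_{l_{i}}^{2}/(2\pi)$, and the step $\tfrac{M_{l_{i}}^{2}}{2\pi}\geq\tfrac{2M_{l_{i}}^{2}}{9}$ used there is not valid), so the constants only close after more care than the paper displays. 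You instead observe that the whole tail of the Lemma~\ref{lemma4} expansion resums exactly to $R\,aK_{a}$ with $a=n^{(k_{0})}<M_{l_{i}-1}$ and $\vert R\vert=1$, and that on the test set $aK_{a}=a(a-1)/2$ exactly because every Vilenkin character of index below $M_{l_{i}-1}$ equals $1$ there; this gives $n\vert K_{n}\vert\geq \tfrac{M_{l_{i}}^{2}}{2\pi}-\tfrac{M_{l_{i}-1}^{2}}{2}\geq\big(\tfrac{1}{2\pi}-\tfrac18\big)M_{l_{i}}^{2}$, i.e. the explicit absolute constant $c=\tfrac{4-\pi}{8\pi}$, with no side condition on $l_{i}$ (beyond $l_{i}\geq 1$, already needed for the set in the statement to make sense) and, as you note, no dependence on $\lambda$. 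In short: same decomposition and same two kernel lemmas, but your exact evaluation of the tail replaces the paper's delicate numerical estimates and is both cleaner and sharper; it is also the step that actually makes the constant work, so it deserves the emphasis you give it.
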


\begin{proof}
Let $x\in I_{l_{i}+1}\left( e_{l_{i}-1}+e_{l_{i}}\right) .$ By combining (%
\ref{100kn}) in Lemma \ref{lemma3}, (\ref{3}) and (\ref{9dn}) we obtain that 
\begin{equation*}
D_{l_{i}}=0
\end{equation*}
and 
\begin{equation*}
D_{s_{n_k}M_{s_{n_k}}}=K_{s_{n_k}M_{{s_{n_k}}}}=0, \ \ s_{n_k}> l_{i}.
\end{equation*}

Since $s_{n_1}>s_{n_2}>\dots >s_{n_r}\geq 0$ we find that 
\begin{eqnarray*}
n^{(k)}&=& n-\sum_{i=1}^{k}s_{n_i}M_{n_{i}}=\sum_{i=k+1}^{s}s_{n_i}M_{n_{i}} \\
&\leq & \sum_{i=0}^{n_{k+1}}(m_{i}-1)M_{i}= m_{n_{k+1}}M_{n_{k+1}}-1\leq M_{n_{k}}.
\end{eqnarray*}

According to Lemma \ref{lemma4} we have that 
\begin{eqnarray*}
n\left\vert K_{n}\right\vert &\geq &\left\vert
s_{l_{i}}M_{l_{i}}K_{s_{l_{i}}M_{l_{i}}}\right\vert \\
&-&\sum_{r=1}^{i-1}\sum_{k=l_{r}}^{m_{r}}\left\vert
s_{k}M_{k}K_{s_{k}M_{k}}\right\vert  \notag \\
&-&\sum_{r=1}^{i-1}\sum_{k=l_{r}}^{m_{r}}\left\vert
M_{k}D_{s_{k}M_{k}}\right\vert \\
&=& I_1-I_2-I_3.
\end{eqnarray*}

Let $x\in I_{l_{i}+1}\left( e_{l_{i}-1}+e_{l_{i}}\right) $ and $1\leq
s_{l_{i}}\leq m_{l_{i}}-1$. By using Lemma \ref{lemma3} we get that 
\begin{eqnarray}  \label{10.0}
I_1=\left\vert s_{l_{i}}M_{l_{i}}K_{s_{l_{i}}M_{l_{i}}}\right\vert
\geq \frac{M_{l_{i}}^{2}}{2\pi }\geq \frac{2M_{l_{i}}^{2}}{9}.  \notag
\end{eqnarray}

It is easy to see that%
\begin{eqnarray*}
\sum_{s=0}^{k}n_{s}^{2}M_{s}^{2}&\leq& \sum_{s=0}^{k}\left( m_{s}-1\right) ^{2}M_{s}^{2} \\
&\leq
&\sum_{s=0}^{k}m_{s}^{2}M_{s}^{2}-2\sum_{s=0}^{k}m_{s}M_{s}^{2}+%
\sum_{s=0}^{k}M_{s}^{2} \\
&=&\sum_{s=0}^{k}M_{s+1}^{2}-2\sum_{s=0}^{k}M_{s+1}M_{s}+%
\sum_{s=0}^{k}M_{s}^{2} \\
&=&
M_{k+1}^{2}+2\sum_{s=0}^{k}M_{s}^{2}-2\sum_{s=0}^{k}M_{s+1}M_{s}-M_{0}^{2} \\
&\leq & M_{k+1}^{2}-1.
\end{eqnarray*}%
and%
\begin{eqnarray*}
\sum_{s=0}^{k}n_{s}M_{s}\leq \sum_{s=0}^{k}\left( m_{s}-1\right) M_{s}= m_{k}M_{k}-m_{0}M_{0} \leq M_{k+1}-2.
\end{eqnarray*}

Since $m_{i-1}\leq l_{i}-2$ if we use the estimates above, then we obtain that

\begin{eqnarray}  \label{10.1}
I_2 &\leq &\sum_{s=0}^{l_{i}-2}\left\vert n_{s}M_{s}K_{n_{s}M_{s}}\left(
x\right) \right\vert \leq\sum_{s=0}^{l_{i}-2}n_{s}M_{s}\frac{\left( n_{s}M_{s}+1\right) }{2} \\ \notag
&\leq &\frac{\left( m_{l_{i}-2}-1\right) M_{l_{i}-2}}{2}\sum_{s=0}^{l_{i}-2}%
\left( n_{s}M_{s}+1\right) 
\end{eqnarray}

\begin{eqnarray}
&\leq &\frac{\left( m_{l_{i}-2}-1\right) M_{l_{i}-2}}{2}M_{l_{i}-1} + \frac{\left( m_{l_{i}-2}-1\right) M_{l_{i}-2}}{2}l_{i}  \notag \\
&\leq &\frac{M_{l_{i}-1}^{2}}{2}-\frac{M_{l_{i}-2}M_{l_{i}-1}}{2}%
+M_{l_{i}-1}l_{i}.  \notag
\end{eqnarray}

For $I_3$ we have that%
\begin{eqnarray}  \label{10.3}
 I_3 &\leq &\sum_{k=0}^{l_{i}-2}\left\vert M_{k}D_{n_{k}M_{k}}\left( x\right)
\right\vert\leq \sum_{k=0}^{l_{i}-2}n_{k}M_{k}^{2}  \notag \\
&\leq& M_{l_{i}-2}\sum_{k=0}^{l_{i}-2}n_{k}M_{k}  \leq M_{l_{i}-1}M_{l_{i}-2}-2M_{l_{i}-2}.  \notag
\end{eqnarray}

By combining (\ref{10.0})-(\ref{10.3}) we have that%
\begin{eqnarray*}
 n\left\vert K_{n}\left( x\right) \right\vert &\geq & I_1-I_2-I_3 \\
&\geq &\frac{M_{l_{i}}^{2}}{2\pi }+\frac{3}{2}+2M_{l_{i}-2} \\
&-&\frac{M_{l_{i}-1}M_{l_{i}-2}}{2}-\frac{M_{l_{i}-1}^{2}}{2}%
-M_{l_{i}-1}l_{i} \\
&\geq &\frac{M_{l_{i}}^{2}}{2\pi }-\frac{M_{l_{i}}^{2}}{16}-\frac{%
M_{l_{i}}^{2}}{8}+\frac{7}{2}-M_{l_{i}-1}l_{i} \\
&\geq &\frac{2M_{l_{i}}^{2}}{9}-\frac{3M_{l_{i}}^{2}}{16}+\frac{7}{2}%
-M_{l_{i}-1}l_{i} \\
&\geq &\frac{M_{l_{i}}^{2}}{144}-M_{l_{i}-1}l_{i}.
\end{eqnarray*}

Suppose that $l_{i}\geq 4$. Then 
\begin{eqnarray*}
 n\left\vert K_{n}\left( x\right) \right\vert &\geq& \frac{M_{l_{i}}^{2}}{36}-\frac{M_{l_{i}}}{4} \geq \frac{M_{l_{i}}^{2}}{36}-\frac{M_{l_{i}}^{2}}{64}\geq \frac{5M_{l_{i}}^{2}}{36\cdot 16} \geq \frac{M_{l_{i}}^{2}}{144}.
\end{eqnarray*}
The proof is complete.
\end{proof}

\qquad

\section{The Main Result}
Our main result reads:
\begin{theorem}
	\label{theorem1}a) Let $f\in H_{1/2}.$ Then there exists an absolute constant $%
	c,$ such that
	\begin{equation*}
	\sup_{n\in \mathbb{N}}\frac{1}{n\log n}\overset{n}{\underset{k=1}{\sum }}%
	\left\Vert\sigma_{k}f\right\Vert _{H_{1/2}}^{1/2}\leq c \left\Vert f\right\Vert _{H_{1/2}}^{1/2},\ \  \  \ n=2,3,... 
	\end{equation*}
	
	b) Let $\varphi :\mathbb{N}_{+}\rightarrow \lbrack 1,$ $\infty )$ be a nondecreasing
	function satisfying the condition
	\begin{equation}
	\overline{\lim_{n\rightarrow \infty }}\frac{\log n}{\varphi _{n}}=+\infty .
	\label{cond1}
	\end{equation}
	
	Then there exists a function $f\in H_{1/2},$ such that
	\begin{equation*}
	\sup_{n\in \mathbb{N}_+}\frac{1}{n\varphi_n}\overset{n}{\underset{k=1}{\sum }}%
		\left\Vert\sigma_{k}f\right\Vert _{H_{1/2}}^{1/2}=\infty .
	\end{equation*}
\end{theorem}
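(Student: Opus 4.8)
The theorem has two parts: a positive strong-convergence estimate (part a) and a sharpness/counterexample statement (part b). I will sketch both.

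\textbf{Part a).} The plan is to reduce to atoms via Lemma \ref{lemma2.1}. Since the functional $f\mapsto \sup_{n}\frac{1}{n\log n}\sum_{k=1}^{n}\left\Vert\sigma_{k}f\right\Vert_{H_{1/2}}^{1/2}$ is subadditive (up to the usual $p$-additivity of the quasi-norm with $p=1/2$), it suffices by the standard atomic argument of Weisz type to prove the estimate with an absolute constant for an arbitrary $p$-atom $a$ supported on an interval $I=I_{N}$ with $\left\Vert a\right\Vert_\infty\le M_N^{2}$ (taking $\mu(I)=M_N^{-1}$ and $p=1/2$). So I would fix such an atom and estimate $\left\Vert \sigma_k a\right\Vert_{H_{1/2}}^{1/2}$, splitting the sum $\sum_{k=1}^{n}$ according to the size of $k$ relative to $M_N$. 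For $k\le M_N$ one has $\sigma_k a=0$ because $S_j a=0$ for $j\le M_N$ (the atom has vanishing coefficients up to index $M_N$), so only $k>M_N$ contributes; one then further splits into dyadic-type blocks $M_N\le k<M_{N+1}$ and $k\ge M_{N+1}$. On these ranges one uses the pointwise kernel representations — Lemma \ref{lemma3}, Lemma \ref{lemma4}, and the localization \eqref{100kn} — to get the pointwise bound $|\sigma_k a(x)|$ is supported (essentially) on $\overline{I_N}$ with a controlled $L^{1/2}$-type norm, and then integrates over $I_t\setminus I_{t+1}$ for $t<N$. The key technical input is that $\int_{\overline{I_N}}\big(\sup_k\frac{1}{k}|kK_k|\big)^{1/2}$-type quantities, weighted by $1/(k\log k)$ and summed, produce only an absolute constant times $\|a\|_{H_{1/2}}^{1/2}=1$; this mirrors the Walsh-case argument of \cite{bt1,tep5} and the partial-sum analogue from \cite{tut1}.

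\textbf{Part b).} Here I would construct an explicit martingale $f\in H_{1/2}$ by superposing atoms built from the kernels analyzed in Lemma \ref{lemma5}. Using condition \eqref{cond1}, pick a rapidly increasing sequence of indices $\{n_k\}$ along which $\log(n_k)/\varphi_{n_k}\to\infty$, and set
\begin{equation*}
f=\sum_{k} \lambda_k\, a_k,\qquad \lambda_k = \frac{1}{\varphi_{m_{\alpha_k}}^{1/2}}\ \text{(schematically)},
\end{equation*}
where each $a_k$ is (a normalization of) the function $D_{M_{\alpha_k+1}}-D_{M_{\alpha_k}}$ on a suitable interval, chosen so that $\sum_k|\lambda_k|^{1/2}<\infty$ (this forces $f\in H_{1/2}$ by Lemma \ref{lemma2.1}) while the numbers $n=\sum_i s_{n_i}M_{n_i}$ built from the supports fall into the regime of Lemma \ref{lemma5}. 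Then for each such $n$, Lemma \ref{lemma5} gives the lower bound $n|K_n(x)|\ge cM_{l_i}^{2}$ on the set $I_{l_i+1}(e_{l_i-1}+e_{l_i})$, whose measure is $\sim M_{l_i}^{-1}$; this yields $\left\Vert\sigma_n f\right\Vert_{H_{1/2}}^{1/2}\gtrsim M_{l_i}/(\text{normalization})$, and after choosing parameters the block sum $\frac{1}{n\varphi_n}\sum_{k=1}^{n}\left\Vert\sigma_k f\right\Vert_{H_{1/2}}^{1/2}$ along the chosen subsequence is bounded below by a constant times $\log(n)/\varphi_n$, which is unbounded by \eqref{cond1}.

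\textbf{Main obstacle.} The delicate point in part a) is controlling $\left\Vert\sigma_k a\right\Vert_{H_{1/2}}$ — i.e. the $H_{1/2}$ (martingale maximal) quasi-norm, not just $L_{1/2}$ — uniformly in $k$ over the block $M_N\le k<M_{N+1}$, since $\sigma_k$ is not bounded on $H_{1/2}$ (by \cite{tep1}); the summability over $k$ weighted by $1/(k\log k)$ is exactly what rescues the estimate, so the bookkeeping of which powers of $M_N$ appear on each dyadic block must be done carefully. In part b) the obstacle is the simultaneous balancing act: the coefficients must decay fast enough for $\sum|\lambda_k|^{1/2}<\infty$ yet slowly enough (relative to $\varphi$) that the divergence survives; arranging the support indices $l_i$ so that Lemma \ref{lemma5}'s gap condition $m_{i-1}\le l_i-2$ holds while the partial index $n$ stays comparable to $M_{l_i}$ (so that dividing by $n$ does not kill the $M_{l_i}^{2}$) is where condition \eqref{cond1} must be invoked precisely.
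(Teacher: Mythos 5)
Your plan for part a) is not the paper's route and, as written, contains a genuine gap. The paper proves a) in one line: it invokes the single-operator estimate $\left\Vert \sigma_{k}f\right\Vert _{H_{1/2}}^{1/2}\leq c\log k\left\Vert f\right\Vert _{H_{1/2}}^{1/2}$ (quoted from \cite{tepthesis}) and then simply sums, using $\sum_{k=1}^{n}\log k\leq n\log n$. Your atomic-decomposition sketch is aimed at re-deriving precisely such a bound, but the decisive ingredient -- a quantitative control of $\left\Vert \sigma_{k}a\right\Vert _{H_{1/2}}$ (the martingale maximal quasi-norm, not merely $L_{1/2}$) with the correct power of $\log k$, uniformly over each block $M_{N}\leq k<M_{N+1}$ -- is exactly the step you leave to ``mirrors of \cite{bt1,tep5}''. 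Those papers prove the differently weighted inequality $\frac{1}{\log n}\sum_{k\leq n}\left\Vert \sigma_{k}f\right\Vert _{1/2}^{1/2}/k\lesssim \left\Vert f\right\Vert _{H_{1/2}}^{1/2}$, which has neither the Ces\`{a}ro weight $1/(n\log n)$ nor the $H_{1/2}$-norm on the left; so nothing you cite or sketch actually yields the needed estimate, and you yourself flag it as the ``main obstacle'' without resolving it. (Also, $\sigma_{k}a$ is not essentially supported on $\overline{I_{N}}$; the contribution on $I_{N}$ requires a separate argument.)

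For part b) your construction is in the same spirit as the paper's (atoms built from $M_{\alpha_{k}}(D_{2M_{\alpha_{k}}}-D_{M_{\alpha_{k}}})$, with Lemma \ref{lemma5} as the engine), but two essential points are missing. First, the coefficient choice $\lambda_{k}\approx \varphi^{-1/2}$ cannot work: condition \eqref{cond1} allows $\varphi$ to be bounded (even $\varphi\equiv 1$), in which case $\sum_{k}\lambda_{k}^{1/2}=\infty$ and Lemma \ref{lemma2.1} no longer places $f$ in $H_{1/2}$. The coefficients must involve $\log M_{\alpha_{k}}$ -- the paper takes $\lambda_{k}=\varphi_{2M_{\alpha_{k}}}/\log M_{\alpha_{k}}$ -- and summability is then secured by choosing the indices $\alpha_{k}$ sparse, as in \eqref{69}. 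Second, the mechanism that produces the factor $\log$ in the block sum is not the size $M_{l_{i}}^{2}$ of the kernel at a single $n$ (after dividing by $n\varphi_{n}$ a single term contributes only $O(1)$); it is a counting of blocks: integrating the lower bound of Lemma \ref{lemma5} over the sets $I_{l_{i}+1}(e_{l_{i}-1}+e_{l_{i}})$ gives $\left\Vert \sigma_{n}f\right\Vert _{1/2}^{1/2}\gtrsim \lambda_{k}^{1/2}\,v(n-M_{\alpha_{k}})$, and one must then sum over all $n$ with $M_{\alpha_{k}}\leq n\leq 2M_{\alpha_{k}}$ and use that the average of the variation, $\frac{1}{M}\sum_{l<M}v(l)$, is of order $\log M$. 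Your per-$n$ bound ``$\gtrsim M_{l_{i}}/(\text{normalization})$'' (which is not what the integration yields) and the jump to ``bounded below by $c\log (n)/\varphi_{n}$'' skip exactly this counting step, which is the heart of the paper's argument in \eqref{8bbb100}.
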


\begin{corollary}
	\label{corollary1}There exists a martingale $f\in H_{1/2},$ such that 
	\begin{equation*}
	\sup_{n\in \mathbb{N_+}}\frac{1}{n}\overset{n}{\underset{k=1}{\sum }}\left\Vert \sigma _{k}f\right\Vert _{1/2}^{1/2}=\infty .
	\end{equation*}
\end{corollary}

\qquad
\begin{proof}[Proof of Theorem \protect\ref{theorem1}]
	In \cite{tepthesis} was proved that there exists an absolute constant $c$, such that
	\begin{equation*}
\left\Vert\sigma_{k}f\right\Vert _{H_{1/2}}^{1/2}\leq c \log k \left\Vert f\right\Vert _{H_{1/2}}^{1/2}, \ \ k=1,2,... 
	\end{equation*} 
Hence,
\begin{equation*}
\frac{1}{n\log n}\overset{n}{\underset{k=1}{\sum }}%
\left\Vert \sigma_{k}f\right\Vert _{H_{1/2}}^{1/2}\leq  \frac{c\left\Vert f\right\Vert_{H_{1/2}}^{1/2}} {n\log n}\overset{n}{\underset{k=1}{\sum }}{\log k}\leq c\left\Vert f\right\Vert_{H_{1/2}}^{1/2}, \ \ n=2,3, \dots
\end{equation*}

The proof of part a) is complete.

Under the condition \eqref{cond1} there exists an increasing sequence of the positive integers $\left\{\alpha_{k}:k\in \mathbb{N}\right\} $ such that
\begin{equation*}
\overline{\lim_{k\rightarrow \infty }}\frac{\log M_{{\alpha _{k}}}}{\varphi
	_{2M_{\alpha _{k}}}}=+\infty
\end{equation*}%
and
\begin{equation} \label{69}
\sum_{k=0}^{\infty }\frac{\varphi _{2M_{\alpha _{k}}}^{1/2}}{\log ^{1/2}M_{{%
			\alpha _{k}}}}<c<\infty .  
\end{equation}

Let $f=( f_n,\ \ n\in \mathbb{N}) $ be martingale, defined by 
\begin{equation*}
f_n :=\sum_{\left\{ k;\text{ }2\alpha_{k}<n\right\} }\lambda _{k}a_{k},
\end{equation*}
where
\begin{equation*}
a_{k}=M_{\alpha_{k}}r_{\alpha _{k}}D_{M_{_{\alpha _{k}}}}=M_{\alpha_{k}}(D_{2M_{_{\alpha
			_{k}}}}-D_{M_{_{\alpha _{k}}}})
\end{equation*}%
and
\begin{equation*}
\lambda _{k}=\frac{\varphi _{2M_{\alpha _{k}}}}{\log M_{{\alpha
			_{k}}}}.
\end{equation*}

Since$\ $%
\begin{equation}
S_{2^{A}}a_{k}=\left\{ 
\begin{array}{ll}
a_{k}, & \alpha _{k} <A, \\ 
0, &  \alpha _{k} \geq A,%
\end{array}%
\right.  \label{4aa}
\end{equation}%
\begin{equation*}
\text{supp}(a_{k})=I_{\alpha _{k} },\text{\ }%
\int_{I_{\alpha _{k} }}a_{k}d\mu =0,\text{\ }\left\Vert
a_{k}\right\Vert _{\infty }\leq M^2_{\alpha _{k} }=\mu (%
\text{supp }a_{k})^{-2},
\end{equation*}%
if we apply Lemma \ref{lemma2.1} and (\ref{69}) we conclude that $f\in H_{1/2}.$

Moreover,
\begin{equation} \label{6}
\widehat{f}(j)=\left\{
\begin{array}{l}
M_{\alpha_{k}}{\lambda _{k}},\,\,\text{\ \ \ \ \thinspace \thinspace }j\in \left\{
M_{\alpha _{k}},...,2M_{\alpha _{k}}-1\right\} ,\text{ }k\in \mathbb{N} \\
0\text{ },\text{ \thinspace \qquad \thinspace \thinspace \thinspace
	\thinspace \thinspace }j\notin \bigcup\limits_{k=1}^{\infty }\left\{
M_{\alpha _{k}},...,2M_{\alpha _{k}}-1\right\} .\text{ }%
\end{array}%
\right.   
\end{equation}

We have that 
\begin{eqnarray}  \label{7}
\sigma _{n}f &=&\frac{1}{n}\sum_{j=0}^{M_{\alpha _{k}}-1}S_{j}f+\frac{1}{n}%
\sum_{j=M_{\alpha _{k}}}^{n-1}S_{j}f \\
&=&I+II.  \notag
\end{eqnarray}

Let $M_{\alpha _{k}}\leq j<2M_{\alpha _{k}}.$ Since
\begin{equation*}
D_{j+M_{\alpha _{k}}}=D_{M_{\alpha _{k}}}+\psi _{_{M_{\alpha _{k}}}}D_{j},%
\text{ \qquad when \thinspace \thinspace }j\leq M_{\alpha _{k}},
\end{equation*}
if we apply (\ref{6}) we obtain that
\begin{eqnarray}  \label{8sas}
S_{j}f &=&S_{M_{\alpha _{k}}}f+\sum_{v=M_{\alpha _{k}}}^{j-1}\widehat{f}%
(v)\psi _{v}  \\
&=&S_{M_{\alpha _{k}}}f+M_{\alpha_{k}}{\lambda _{k}}\sum_{v=M_{\alpha _{k}}}^{j-1}\psi _{v}
\notag \\
&=&S_{M_{\alpha _{k}}}f+M_{\alpha_{k}}{\lambda _{k}}\left( D_{j}-D_{M_{\alpha _{k}}}\right)
\notag \\
&=&S_{M_{\alpha _{k}}}f+{\lambda _{k}}\psi _{M_{\alpha _{k}}}D_{j-M_{\alpha
		_{k}}}  \notag
\end{eqnarray}

According to (\ref{8sas}) concerning $II$ we conclude can that 
\begin{eqnarray*}
II &=&\frac{n-M_{\alpha _{k}}}{n}S_{M_{\alpha_k}}f \\
&&+\frac{\lambda _{k}M_{\alpha _{k}}}{ n}\sum_{j=M_{2\alpha
_{k}}}^{n-1}\psi_{M_{\alpha_{k}}}D_{j-M_{\alpha_{k}}} \\
&&=II_{1}+II_{2}.
\end{eqnarray*}

We can estimate $II_{2}$ as fallows:
\begin{eqnarray*}
\left\vert II_{2}\right\vert &=&\frac{\lambda _{k}M_{\alpha _{k}}}{n}\left\vert \psi _{M_{\alpha _{k}}}\sum_{j=0}^{n-M_{\alpha
_{k}}-1}D_{j}\right\vert \\
&=&\frac{\lambda _{k}M_{\alpha _{k}}}{n}{(n-M_{\alpha _{k}})}%
\left\vert K_{n-M_{\alpha _{k}}}\right\vert \\
&\geq &{\lambda_{k}}\left( n-M_{\alpha _{k}}\right)
\left\vert K_{n-M_{\alpha _{k}}}\right\vert.
\end{eqnarray*}

Let $n=\sum_{i=1}^{s}\sum_{k=l_{i}}^{m_{i}}M_{k},$ \ where 
\begin{equation*}
0\leq l_{1}\leq m_{1}\leq l_{2}-2<l_{2}\leq m_{2}\leq ...\leq
l_{s}-2<l_{s}\leq m_{s}.
\end{equation*}

By applying Lemma \ref{lemma5} we get that 
\begin{eqnarray*}
\left\vert II_{2}\right\vert &\geq &{c\lambda _{k}\left\vert \left( n-M_{\alpha
_{k}}\right) K_{n-M_{\alpha _{k}}}\left( x\right) \right\vert } \\
&\geq &{c\lambda _{k}M_{l_{i}}^{2}},\text{ \ \ for \ \ }x\in
I_{l_{i}+1}\left( e_{l_{i}-1}+e_{l_{i}}\right) .
\end{eqnarray*}

Hence 
\begin{eqnarray}\label{8aaa0} 
\int_{G_{m}}\left\vert II_{2}\right\vert ^{1/2}d\mu  
&\geq &\sum_{i=1}^{s-1}\int_{I_{l_{i}+1}\left( e_{l_{i}-1}+e_{l_{i}}\right)
}\left\vert II_{2}\right\vert ^{1/2}d\mu   \\ \notag
&\geq &c\sum_{i=1}^{s-1}\int_{I_{l_{i}+1}\left( e_{l_{i}-1}+e_{l_{i}}\right)
}{\lambda _{k}^{1/2}M_{l_{i}}}d\mu   \\ \notag
&\geq &{c\lambda _{k}^{1/2}\left( s-1\right) } \geq{c\lambda _{k}^{1/2}v\left( n-M_{\alpha _{k}}\right) }.  \notag
\end{eqnarray}

In view of  (\ref{1ccs}), (\ref{1ccsimga}) and (\ref{7})  we find that 
\begin{eqnarray}  \label{8aaa}
\left\Vert I\right\Vert ^{1/2}&=&\left\Vert \frac{M_{\alpha _{k}}}{n}%
\sigma_{M_{\alpha _{k}}}f\right\Vert _{1/2}^{1/2} \leq  \left\Vert \sigma_{M_{\alpha _{k}}}f\right\Vert _{1/2}^{1/2}  \leq c\left\Vert f\right\Vert_{H_{1/2}}^{1/2} 
\end{eqnarray}
and 
\begin{eqnarray}  \label{8bbb}
\left\Vert II_{1}\right\Vert ^{1/2} 
&=&\left\Vert \frac{n-M_{\alpha _{k}}}{n}S_{M_{\alpha _{k}}}f\right\Vert
_{1/2}^{1/2}  \leq  \left\Vert S_{M_{\alpha _{k}}}f\right\Vert _{1/2}^{1/2}   \leq  c\left\Vert f\right\Vert_{H_{1/2}}^{1/2}.  
\end{eqnarray}

By combining (\ref{8aaa0}), (\ref{8aaa}) and (\ref{8bbb}) we get that 
\begin{eqnarray*}
\left\Vert \sigma _{n}f\right\Vert _{1/2}^{1/2} 
&\geq &\left\Vert II_{2}\right\Vert _{1/2}^{1/2}-\left\Vert
II_{1}\right\Vert _{1/2}^{1/2}-\left\Vert I\right\Vert _{1/2}^{1/2} \\
&\geq &{c\lambda _{k}^{1/2}v\left( {n-M_{\alpha _{k}}}\right) }%
-c\left\Vert f\right\Vert _{H_{1/2}}^{1/2}.
\end{eqnarray*}

By using estimates with the above we can conclude that 
\begin{eqnarray}
&&\underset{n\in \mathbf{\mathbb{N}}_{+}}{\sup }\frac{1}{n\varphi_n}\underset{k=1}{%
\overset{n}{\sum }}\left\Vert \sigma _{k}f\right\Vert _{1/2}^{1/2}
\label{8bbb100} \\
&\geq &\frac{1}{M_{\alpha _{k}+1}\varphi_{2M_{\alpha _{k}}}}\underset{\left\{ M_{\alpha _{k}}\leq
l\leq 2M_{\alpha _{k}}\right\} }{\sum }\left\Vert \sigma _{l}f\right\Vert
_{1/2}^{1/2}  \notag \\
&\geq &\frac{c}{M_{\alpha _{k}+1}\varphi_{2M_{\alpha _{k}}}}\underset{\left\{ M_{\alpha _{k}}\leq
l\leq 2M_{\alpha _{k}}\right\} }{\sum }\left( {\lambda _{k}^{1/2}v\left( l-M_{\alpha
_{k}}\right) }-c\left\Vert f\right\Vert
_{H_{1/2}}^{1/2}\right)  \notag \\
&\geq &\frac{c\lambda _{k}^{1/2}}{M_{\alpha _{k}}\varphi_{2M_{\alpha _{k}}}}\underset{l=1}{\overset{%
M_{\alpha _{k}}}{\sum }}v\left( l\right)-\frac{c\left\Vert f\right\Vert _{H_{1/2}}^{1/2}}{M_{\alpha _{k}}\varphi_{2M_{\alpha _{k}}}}%
\underset{\left\{ M_{\alpha _{k}}\leq l\leq 2M_{\alpha _{k}}\right\} }{\sum }%
1  \notag \\
&\geq &\frac{c\lambda _{k}^{1/2}}{M_{\alpha _{k}}\varphi_{2M_{\alpha _{k}}}}\underset{l=1}{\overset{%
M_{\alpha _{k}}-1}{\sum }}v\left( l\right)- c\geq c\frac{\log^{1/2} M_{{\alpha _{k}}}}{\varphi_{2M_{\alpha _{k}}}^{1/2}}\rightarrow \infty ,\text{ as \ }k\rightarrow
\infty .  \notag
\end{eqnarray}

The proof is complete.
\end{proof}

\end{document}